\newtheorem{theorem}{Theorem}
\newtheorem{lemma}{Lemma}
\newtheorem{corollary}{Corollary}
\theoremstyle{definition}
\newtheorem*{def*}{Definition}
\newtheorem{remark}{\textbf{Remark}}
\theoremstyle{remark}
\newtheorem*{claim*}{\textsc{Claim}}
\DeclareMathSymbol{\widehatsym}{\mathord}{largesymbols}{"62}
\renewcommand{\rho}{\varrho}
\providecommand{\NNb}{\mathbf{N}}
\renewcommand{\xi}{a}
\begin{document}
\title{Finite Partially Exchangeable Laws are Signed \\ Mixtures of Product Laws}

\author{Paolo Leonetti}
\address{Universit\`a ``Luigi Bocconi'' -- via Roentgen 1, 20136 Milano, Italy}
\email{leonetti.paolo@gmail.com}
\urladdr{\url{https://sites.google.com/site/leonettipaolo/}} 

\subjclass[2010]{Primary 44A60, 60G09; Secondary 15A24, 46A55, 62E99.}
%

\keywords{Finite partial exchangeability, signed measure, de Finetti representation, true mixture, reduced Hausdorff moment problem.}

\begin{abstract}
\noindent{} Given a partition $\{I_1,\ldots,I_k\}$ of $\{1,\ldots,n\}$, let $(X_1,\ldots,X_n)$ be random vector with each $X_i$ taking values in an arbitrary measurable space $(S,\mathscr{S})$ such that their joint law is invariant under finite permutations of the indexes within each class $I_j$. Then, it is shown that this law has to be a signed mixture of independent laws and identically distributed within each class $I_j$. 

We provide a necessary condition for the existence of a nonnegative directing measure. This is related to the notions of infinite extendibility and reinforcement. In particular, given a finite exchangeable sequence of Bernoulli random variables, the directing measure can be chosen nonnegative if and only if two effectively computable matrices are positive semi-definite.
\end{abstract}
\maketitle
\thispagestyle{empty}

\section{Introduction}\label{sec:introduction}

de Finetti's theorem, in one of its most general forms, states that if $(X_n: n \in \mathbf{N})$ is a sequence of random variables such that each $X_n$ takes values in a Borel space $(S,\mathscr{S})$ and their joint law is invariant under finite permutations of the indexes, then there exists a unique probability measure $\mu$ on the set $\mathcal{P}(S)$ of probability measures on $S$ such that 
\begin{equation}\label{eq:definetti}
\mathrm{Pr}\left((X_n: n \in \mathbf{N}) \in A\right)=\int_{\mathcal{P}(S)} \nu^\infty(A)\, \mu(\mathrm{d}\nu)
\end{equation}
for all $A \in \mathscr{S}^{\mathbf{N}}$. Here, $\nu^\infty$ stands for the (countably infinite) product measure $\nu \times \nu \times \cdots$, and $\mathcal{P}(S)$ is equipped with the $\sigma$-field generated by the weak${}^\star$ topology; see, for instance, \cite[Theorem 1.1]{Kall05}. More generally, the result holds if $S$ is a locally compact Hausdorff space and $\mathscr{S}$ its Baire $\sigma$-field, as shown by Hewitt and Savage in \cite[Theorem 7.2]{HSav}. On the other hand, some kind of assumptions on $S$ are needed to ensure representation \eqref{eq:definetti} holds: indeed, Dubins and Freedman have shown that there exists a separable metric space $S$ for which the result fails \cite[Theorem 2.14]{DubF}.

The finite case is completely different. Recently, Kerns and Sz\'ekely proved in \cite[Theorem 1.1]{KSze} that, if $(S,\mathscr{S})$ is an \emph{arbitrary} measurable space and $(X_1,\ldots,X_n)$ is exchangeable, then there exists a bounded \emph{signed} measure $\mu$ on the set $\mathcal{P}(S)$ of probability measures on $S$ such that 
\begin{equation}\label{eq:kernszek}
\mathrm{Pr}\left((X_1,\ldots,X_n) \in A\right)=\int_{\mathcal{P}(S)} \nu^{n}(A) \,\mu(\mathrm{d}\nu)
\end{equation}
for all $A \in \mathscr{S}^{n}$. Similarly, $\nu^n$ stands for the product measure $\nu \times \cdots \times \nu$, and $\mathcal{P}(S)$ is equipped with the smallest $\sigma$-field for which all the maps $\nu\mapsto \nu(B)$ are measurable, where $B$ ranges over $\mathscr{S}$. Accordingly, the directing measure $\mu$ cannot be assumed to be nonnegative, as it is shown in the well-known example provided by Diaconis and Freedman \cite{DF80}. An interpretation of the geometric structure underlying the proof of representation \eqref{eq:kernszek} can be found in \cite{Dia77}.

With these premises, the article focuses entirely on the finite case. In the first part, Theorem \ref{th:finiterepresentation} provides a generalization of the finite representation \eqref{eq:kernszek} to the case of finite partially exchangeable sequences, i.e., whenever the law of $(X_1,\ldots,X_n)$ is invariant under permutations of the indexes within each class $I_j$, for some partition $\{I_1,\ldots,I_k\}$ of $\{1,\ldots,n\}$. Under rather weak topological assumptions, this finite representation is unique if and only if the set of signed directing measures $\mu$ is compact. This type of invariance could be termed \emph{subgroup exchangeability}, cf. Remark \ref{lm:Gproduct}. Relationships with other types of partial exchangeability studied in the literature will be clarified in Section \ref{sec:mainresults}. As pointed out in \cite{KSze} and remarked in \cite{KonstYuan0}, the result still allows to prove the consistency of Bayesian estimators, namely, the sequence of posterior distributions relative to the true unknown parameter $\theta$ of the model converges to the degenerate distribution on $\theta$; cf. e.g. \cite[Proposition 4.1]{KSze} for the finite exchangeability case. See \cite{Barron} for an application 
to the Bayesian properties of normalized maximum likelihood.



In the second part, we provide a necessary condition to ensure the existence of a \emph{nonnegative} directing measure among all signed measures $\mu$ which satisfy the representation result, 
see Theorem \ref{th:truemixtures}.  
It turns out that the question is related to the notions of infinite extendibility and reinforcement. 
Lastly, we obtain necessary and sufficient conditions on exchangeable sequences taking values in $\{0,1\}$ to be mixtures, in the classical sense, of i.i.d. (Bernoulli) random variables, see Theorem \ref{th:positive}. In this regard, the problem can be equivalently reformulated to question whether a point belongs to the convex hull of a known finite set of extremal points; see, for instance, \cite[p. 275]{Dia77}. Here, differently from the geometric characterization, we show that an exchangeable law on $\{0,1\}^n$ is a mixture of i.i.d. random variables if and only if two effectively computable matrices are positive semi-definite.

\section{The finite representation}\label{sec:mainresults}

Given an index set $I$ and a subset $G$ of the group of permutations on $I$, a sequence $(X_i)_{i \in I}$ of random variables defined on a probability space $(\Omega, \mathscr{F}, \mathrm{P})$ with each $X_i$ taking values in a measurable space $(S, \mathscr{S})$ is said to be \emph{exchangeable over} $G$ whenever $(X_i: i \in I) \overset{d}{=} (X_{\sigma(i)}: i \in I)$ for all $\sigma \in G$.

Hereafter, let us suppose that $G$ can be written as the product of the symmetric groups on $I_1,\ldots,I_k$, for some partition $\{I_1,\ldots,I_k\}$ of $\{1,\ldots,n\}$. In other words, the joint law of $(X_1,\ldots,X_n)$ is invariant under permutations of the indexes within each class $I_j$. 

Then, our main result, which will be proved in Section \ref{sec:proofs}, can be stated as follows:

\begin{theorem}\label{th:finiterepresentation}
Let $(X_1,\ldots,X_n)$ be an exchangeable sequence over $G$ of random variables with each $X_i$ taking values in a measurable space $(S,\mathscr{S})$. Then there exists a bounded signed measure $\mu$ on $\mathcal{P}(S)^k$ such that
\begin{equation}\label{eq:representationfiniteexchangeable}
\mathrm{P}\left((X_i: i \in I_j) \in A_j, j=1,\ldots,k\right)=\int_{\mathcal{P}(S)^k} \nu_1^{n_1}(A_1) \cdots \nu_k^{n_k}(A_k) \,\mu(\mathrm{d}\nu_1,\ldots,\mathrm{d}\nu_k)
\end{equation}
for all $A_1 \in \mathscr{S}^{n_1},\ldots,A_k \in \mathscr{S}^{n_k}$, where $n_j$ stands for the cardinality of $I_j$ for each $j$.
\end{theorem}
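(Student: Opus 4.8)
The plan is to prove \eqref{eq:representationfiniteexchangeable} first for measurable rectangles over finite state spaces, where it reduces to a linear‑algebra (moment) problem, and then to transfer the result to an arbitrary $(S,\mathscr S)$ by a limiting argument. First I would reduce the claim to rectangles $A_j=\prod_{i\in I_j}B_{j,i}$ with $B_{j,i}\in\mathscr S$: both sides of \eqref{eq:representationfiniteexchangeable} are (signed) measures in $(A_1,\dots,A_k)$, so a monotone‑class argument shows that it suffices to match them on such rectangles. Fixing the finitely many sets $B_{j,i}$ and passing to the finite algebra they generate, I may assume that $S$ is a finite set, say $S=\{1,\dots,m\}$, so that $\mathcal P(S)$ is the simplex $\Delta=\{p\in\mathbb R^m_{\ge 0}:\sum_\ell p_\ell=1\}$.

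On a finite $S$, invariance over $G$ means that $\mathrm P$ assigns to each word $(x_i)_i$ a value $f(c^{(1)},\dots,c^{(k)})$ depending only on the within‑class count vectors $c^{(j)}\in\mathbb Z_{\ge 0}^m$, $|c^{(j)}|=n_j$. Since a product law contributes $\prod_j (p^{(j)})^{c^{(j)}}:=\prod_j\prod_\ell (p^{(j)}_\ell)^{c^{(j)}_\ell}$ to each such word, \eqref{eq:representationfiniteexchangeable} becomes the finite system of moment equations
\begin{equation*}
\int_{\Delta^k}\prod_{j=1}^k (p^{(j)})^{c^{(j)}}\,\mu(\mathrm dp^{(1)},\dots,\mathrm dp^{(k)})=f(c^{(1)},\dots,c^{(k)}),
\end{equation*}
one for each admissible tuple $(c^{(1)},\dots,c^{(k)})$. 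The monomials appearing here are linearly independent as functions on $\Delta^k$: within a single class the degree‑$n_j$ monomials are linearly independent on $\Delta$ (equivalently, using $\sum_\ell p^{(j)}_\ell=1$, they span the polynomial functions of degree at most $n_j$ on $\Delta$), and independence tensorises over the $k$ classes. Hence one can select as many points of $\Delta^k$ as there are admissible tuples so that the associated generalized Vandermonde (evaluation) matrix is nonsingular; solving the resulting square linear system produces signed weights, i.e. a bounded signed measure $\mu$ supported on finitely many points of $\Delta^k$ satisfying every moment equation. This settles the case of finite $S$.

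The remaining, and principal, difficulty is to upgrade this finite‑dimensional solution to a single bounded signed measure on $\mathcal P(S)^k$ for an arbitrary measurable space. Each choice of finitely many sets $B_{j,i}$ yields a finite sub‑algebra and a finite‑dimensional signed solution, and refining the sub‑algebra corresponds to splitting atoms, under which $\mathcal P(S_{\mathrm{coarse}})$ is a projection of $\mathcal P(S_{\mathrm{fine}})$ and product laws push forward to product laws; so these solutions form a compatible net. Unlike the nonnegative case, however, there is no Kolmogorov‑type extension theorem for signed measures, and indeed $\mu$ is in general far from unique (cf. the compactness criterion recorded after the statement). I expect this to be the main obstacle, and would address it by establishing a bound on the total variation $\|\mu\|$ that is uniform over all finite sub‑algebras—most naturally by exhibiting the weights explicitly through the inclusion–exclusion (Stirling‑type) inversion relating sampling with and without replacement within each class, whose coefficients depend only on $n_1,\dots,n_k$ and not on $S$.

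With such a uniform bound in hand, the compatible net of finite‑dimensional signed solutions lies in a fixed ball of the space of bounded signed measures on $\mathcal P(S)^k$; extracting a weak‑$\star$ cluster point (using that $\mathcal P(S)$ carries the initial $\sigma$‑field making the evaluations $\nu\mapsto\nu(B)$ measurable) yields a bounded signed $\mu$ that satisfies \eqref{eq:representationfiniteexchangeable} on every rectangle, hence—by the monotone‑class reduction of the first step—for all $A_j\in\mathscr S^{n_j}$. An alternative to the compactness step is to note that the only structural input used above is that the $G$‑invariant measures on $\prod_j S^{n_j}$ span the tensor product of the per‑class spaces of symmetric measures, so that the argument of Kerns and Székely for \eqref{eq:kernszek} applies \emph{mutatis mutandis} with the symmetric group replaced by the product group $G$; this tensorisation is what makes the partially exchangeable case, in principle, no harder than the exchangeable one.
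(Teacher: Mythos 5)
Your reduction to a finite state space and the linear--algebra solution there are fine, but the step you yourself flag as the main obstacle --- passing from the finite sub-algebras back to a single bounded signed measure on $\mathcal P(S)^k$ --- is a genuine gap, and the proposed fix does not close it. A uniform total-variation bound over the net of finite-dimensional solutions does not buy you a cluster point: $\mathcal P(S)$ here is only a measurable space (the initial $\sigma$-field of the evaluations $\nu\mapsto\nu(B)$), so the ball of bounded signed measures on $\mathcal P(S)^k$ carries no topology in which it is compact, and there is no Prokhorov- or Kolmogorov-type extension theorem for compatible families of \emph{signed} measures. (This is exactly why the infinite de Finetti theorem needs topological hypotheses on $S$, and why Kerns--Sz\'ekely and Janson--Konstantopoulos--Yuan avoid any limiting argument in the finite case.) Moreover the finite-dimensional solutions produced by your generic Vandermonde point-selection are highly non-unique, so the ``compatible net'' is not actually canonical unless you first fix the explicit inversion --- at which point the whole detour through finite $S$ becomes unnecessary.

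The correct idea is already in your last paragraph: the with/without-replacement inversion within each class. Pushed to its conclusion, it removes the need for any limit. For each $\omega$, the per-class empirical measures $\nu_j(X(\omega))=\sum_{i\in I_j}\delta_{X_i(\omega)}$ are finitely supported probability measures on $S$ itself (no finite-state approximation needed); the orbit measure $|G|^{-1}\sum_{\sigma\in G}\delta_{\sigma X(\omega)}$ is the ``sampling without replacement'' law from these urns, and it is a \emph{universal} signed combination of the product laws $\bigtimes_j\lambda_j^{n_j}$ built from sub-empirical measures $\lambda\in\mathscr V_{X(\omega)}$, with coefficients obtained by inverting a multinomial matrix whose invertibility is Moak's theorem (this replaces your generic nonsingular-evaluation argument and is what gives coefficients depending only on $n_1,\ldots,n_k$). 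One then defines $\mu$ in a single stroke as the $\mathrm P$-integral over $\Omega$ of the resulting finitely supported signed combinations of Dirac masses at $(\nu_1(X(\omega))/n_1,\ldots,\nu_k(X(\omega))/n_k)\in\mathcal P(S)^k$, and concludes by Fubini. Your observation that everything tensorises over the $k$ classes is correct and is precisely what makes the partially exchangeable case go through; but as written, your proof is incomplete at the extension step.
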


As usual, here ``bounded'' means that if $\mu$ has Hahn decomposition $\mu^+-\mu^-$ then $\mu^++\mu^-$ is a finite measure. It is worth noting that Theorem \ref{th:finiterepresentation} generalizes the results of Jaynes \cite{Jaynes} and Kerns and Sz\'ekely \cite[Theorem 1.1]{KSze}. 

The main machinery of the proof can be tracked back to Janson, Konstantopoulos and Yuan \cite[Theorem 1]{KonstYuan0}. On the other hand, it solves an open question in the same article, see \cite[Section 4.8]{KonstYuan0}. However, our main interest in Theorem \ref{th:finiterepresentation} is related to its consequences. 
\begin{corollary}\label{cor:parametric2}
With the notation of Theorem \ref{th:finiterepresentation}, let us assume that $S$ is a separable Banach space with induced Borel $\sigma$-field $\mathscr{S}$. Then there exists a bounded signed measure $\eta$ on $S^k$ such that
\begin{equation}\label{eq:representationfiniteexchangeableparametric2}
\mathrm{P}\left((X_i: i \in I_j) \in A_j, j=1,\ldots,k\right)=\int_{S^k} \nu_{\theta_1}^{n_1}(A_1) \cdots \nu_{\theta_k}^{n_k}(A_k) \,\eta(\mathrm{d}\theta_1,\ldots,\mathrm{d}\theta_k)
\end{equation}
for all $A_1 \in \mathscr{S}^{n_1},\ldots,A_k \in \mathscr{S}^{n_k}$, where each $\nu_{\theta_j}$ is a probability measure on $S$ depending measurably on the parameter $\theta_j$.
\end{corollary}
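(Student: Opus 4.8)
The plan is to transport the representation of Theorem~\ref{th:finiterepresentation} from the space $\mathcal{P}(S)^k$ of mixing measures down to $S^k$ by means of a single Borel isomorphism, leaving the integrand essentially unchanged. The only role of the hypothesis that $S$ is a separable Banach space is to guarantee that both $S$ and $\mathcal{P}(S)$ are standard Borel spaces of the same cardinality, so that such an isomorphism exists.

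First I would record the relevant descriptive-set-theoretic facts. Since $S$ is a separable Banach space it is a Polish space, hence a standard Borel space; moreover it is uncountable whenever $S\neq\{0\}$, the degenerate case being trivial. Because $S$ is Polish, the topology of weak convergence turns $\mathcal{P}(S)$ into a Polish space whose Borel $\sigma$-field coincides with the $\sigma$-field generated by the evaluation maps $\nu\mapsto\nu(B)$, $B\in\mathscr{S}$, used throughout the paper; see, e.g., \cite{Kall05}. Thus $\mathcal{P}(S)$ is a standard Borel space, and it is uncountable since it contains the copy $\{\delta_s:s\in S\}$ of $S$. By the Borel isomorphism theorem, any two uncountable standard Borel spaces are Borel isomorphic, so there is a Borel isomorphism $T\colon S\to\mathcal{P}(S)$. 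Writing $\nu_\theta:=T(\theta)$, the map $\theta\mapsto\nu_\theta$ is a genuine probability kernel, i.e.\ $\theta\mapsto\nu_\theta(B)$ is measurable for every $B\in\mathscr{S}$, which is precisely the measurable dependence required in the statement.

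Next I would push the directing measure forward. Let $\Phi\colon S^k\to\mathcal{P}(S)^k$ be the product map $\Phi(\theta_1,\ldots,\theta_k)=(\nu_{\theta_1},\ldots,\nu_{\theta_k})$, which is again a Borel isomorphism, and let $\mu$ be the bounded signed measure on $\mathcal{P}(S)^k$ furnished by Theorem~\ref{th:finiterepresentation}. Define the signed measure $\eta:=(\Phi^{-1})_{\#}\mu$ on $S^k$; it is bounded because the total variation of a pushforward under a bijective Borel map is unchanged. For fixed $A_1\in\mathscr{S}^{n_1},\ldots,A_k\in\mathscr{S}^{n_k}$ the function $f(\nu_1,\ldots,\nu_k)=\nu_1^{n_1}(A_1)\cdots\nu_k^{n_k}(A_k)$ is measurable on $\mathcal{P}(S)^k$ (this is part of the setup of Theorem~\ref{th:finiterepresentation}), and $f\circ\Phi(\theta_1,\ldots,\theta_k)=\nu_{\theta_1}^{n_1}(A_1)\cdots\nu_{\theta_k}^{n_k}(A_k)$. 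The change-of-variables formula for the pushforward $\eta=(\Phi^{-1})_{\#}\mu$ then yields
\[
\int_{S^k}\nu_{\theta_1}^{n_1}(A_1)\cdots\nu_{\theta_k}^{n_k}(A_k)\,\eta(\mathrm{d}\theta_1,\ldots,\mathrm{d}\theta_k)=\int_{\mathcal{P}(S)^k}\nu_1^{n_1}(A_1)\cdots\nu_k^{n_k}(A_k)\,\mu(\mathrm{d}\nu_1,\ldots,\mathrm{d}\nu_k),
\]
whose right-hand side equals the probability on the left of \eqref{eq:representationfiniteexchangeableparametric2} by Theorem~\ref{th:finiterepresentation}. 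This establishes \eqref{eq:representationfiniteexchangeableparametric2}.

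I expect the only point requiring care, rather than a genuine obstacle, to be the verification that $\mathcal{P}(S)$ equipped with the evaluation $\sigma$-field is a standard Borel space of the cardinality of the continuum, so that the Borel isomorphism theorem applies; once this is in place the argument is a purely formal transport of measures. A secondary point is to confirm that $\theta\mapsto\nu_\theta$ is a probability kernel, but this is immediate from the measurability of the Borel isomorphism $T$ together with the definition of the $\sigma$-field on $\mathcal{P}(S)$.
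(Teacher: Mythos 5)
Your argument is correct, but it takes a more direct route than the paper. The paper's proof first asserts a generalization of \cite[Theorem 2]{KonstYuan0} (valid even for locally convex topological vector spaces, with the help of \cite[Theorem 3.4]{RudinFA}) which yields a representation with $n$ parameters, one per coordinate rather than one per class, and only then invokes the Borel isomorphism theorem for uncountable Polish spaces to collapse the $n_j$ parameters of each class $I_j$ into a single one. You bypass the intermediate $n$-parameter representation entirely: you take the directing measure $\mu$ on $\mathcal{P}(S)^k$ already supplied by Theorem \ref{th:finiterepresentation} and transport it to $S^k$ through a Borel isomorphism $T\colon S\to\mathcal{P}(S)$, setting $\nu_\theta:=T(\theta)$ and $\eta:=(\Phi^{-1})_{\#}\mu$. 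The two ingredients you need --- that $\mathcal{P}(S)$ with the evaluation $\sigma$-field is an uncountable standard Borel space whose $\sigma$-field agrees with the Borel $\sigma$-field of the weak topology when $S$ is Polish, and that the change-of-variables identity survives for signed measures via the Hahn decomposition --- are both standard, and you correctly note that measurability of $\theta\mapsto\nu_\theta(B)$ is automatic from the definition of the evaluation $\sigma$-field. What your approach buys is a self-contained proof of Corollary \ref{cor:parametric2} as stated, with no appeal to external representation theorems; what the paper's detour buys is the stronger intermediate claim for locally convex spaces (where $S$ need not be Polish and your isomorphism argument would not apply), at the cost of outsourcing the main construction to \cite{KonstYuan0}. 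For the corollary as actually stated (separable Banach $S$), your proof is complete, modulo the trivial degenerate case $S=\{0\}$, which you correctly set aside.
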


Note that the result holds in the case of real-valued random variables. If the directing measure $\eta$ can be chosen nonnegative, this may provide a sort of justification for the use of priors in Bayesian parametric.

Lastly, the representation provided in Theorem \ref{th:finiterepresentation} gives, informally, the same amount of information when the assumption of exchangeability over an \emph{arbitrary} subset $G$ is replaced by the exchangeability over the largest \emph{subgroup} contained in $G$, cf. Remark \ref{lm:Gproduct}. This highlights the differences with other types of exchangeability considered in the literature. To make some examples, fix an array of random variables $X=(X_{i,j}: 1\le i\le n, 1\le j\le m)$. In the case of partial exchangeability \emph{\'{a} la de Finetti} \cite{deFinetti}), the joint law of $X$ is invariant under permutations of the random variables within each column: the information provided by our representation is maximal, since $G$ can be written exactly as product of symmetric groups on the indexes of the columns. On the other hand, if $X$ is partially exchangeable \emph{\'{a} la Aldous and Kallenberg}, cf. \cite{Ald81} and \cite{Kall89}, then its law is invariant under all pairs of [respectively, the same] permutations of the indexes of rows and columns; this is commonly known as separately row-column exchangeability [resp., jointly row-column exchangeability]. For instance, a $2\times 2$ array is separately row-column exchangeable if and only if, up to relabelings to indexes, it holds
$$
\begin{pmatrix}
X_1 & X_2 \\
X_3 &  X_4
 \end{pmatrix} \overset{d}{=}
\begin{pmatrix}
X_2 & X_1 \\
X_4 &  X_3
 \end{pmatrix} \overset{d}{=}
\begin{pmatrix}
X_3 & X_4 \\
X_1 &  X_2
 \end{pmatrix} \overset{d}{=}
\begin{pmatrix}
X_4 & X_3 \\
X_2 &  X_1
 \end{pmatrix}.
$$
This implies that the unique contained subgroup of permutations is the trivial one. In a sense, this minimizes the amount of information provided by our representation because it is as if there were no constraints. Similar considerations apply to other types of invariance considered in literature, e.g., Markov exchangeability introduced by Diaconis and Freedman \cite{DiaFre80}.

\section{True Mixtures}\label{sec:truemixtures}

In most cases, the main difference between the finite and the infinite case is that the directing measure $\mu$ may be signed. Therefore, it looks natural to ask about the positiveness of $\mu$ and, more in general, about the \emph{infinite extendibility} of an exchangeable sequence $(X_1,\ldots,X_n)$.

In this respect, Konstantopoulos and Yuan have shown in \cite[Theorem 2]{KonstYuan} that if $S$ is a locally compact Hausdorff space and the law of $X_1$ is inner and outer regular, then such an infinite extension exists if and only if for each integer $N\ge n$ there is an exchangeable sequence $(Y_1,\ldots,Y_N)$ with each $Y_i$ taking values in $(S,\mathscr{S})$ such that $(X_1,\ldots,X_n)\overset{d}{=} (Y_1,\ldots,Y_n)$. In turn, this is equivalent to be a \emph{true mixture} of i.i.d. random variables whenever $S$ is equipped with its Baire $\sigma$-field (hereafter, ``true'' underlines that the directing measure is nonnegative), see \cite[Theorem 3]{KonstYuan}. 

In this section, we provide a set of necessary conditions on the joint law of a random vector $(X_1,\ldots,X_n)$ exchangeable over $G$ which can be represented as true mixture of independent random variables and identically distributed within each class $I_j$. To this aim, we fix some additional notation: given a square matrix $\textsl{M}=(m_{i,j})$ with entries in $\mathscr{S}^{n_1} \times \cdots \times \mathscr{S}^{n_k}$, we shorten the real-valued matrix $\left(\mathrm{P}(X \in m_{i,j})\right)$ with $\mathrm{P}\,\textsl{M}$. Moreover, given positive integers $a,b$ and matrices $\textsl{A}, \textsl{B}$ with entries in $\mathscr{S}^a$ and $\mathscr{S}^b$, respectively, we denote by $\textsl{A} \square \textsl{B}$ the matrix with entries in $\mathscr{S}^{a+b}$, constructed as the analogue of Kronecker product, where the multiplication of entries is replaced by their cartesian products. 

Accordingly, we obtain the following result: 

\begin{theorem}\label{th:truemixtures}
Let $(X_1,\ldots,X_n)$ be an exchangeable sequence over $G$ of random variables with each $X_i$ taking values in a measurable space $(S,\mathscr{S})$, and let us suppose that the signed measure $\mu$ in the the finite representation \eqref{eq:representationfiniteexchangeable} is nonnegative.

Then, the following matrix
\renewcommand{\arraystretch}{1.2}
$$
\mathrm{P}\,\mathlarger{\square}_{j=1}^k
\begin{pmatrix}
  A_j^{2m_j}\times B_j & A_j^{2m_j-1}\times S \times B_j & \cdots & A_j^{m_j}\times S^{m_j} \times B_j \\
  A_j^{2m_j-1}\times S \times B_j & A_j^{2m_j-2}\times S^2 \times B_j & \cdots & A_j^{m_j-1}\times S^{m_j+1} \times B_j \\
  \vdots  & \vdots  & \ddots & \vdots  \\
  A_j^{m_j}\times S^{m_j} \times B_j & A_j^{m_j-1}\times S^{m_j+1} \times B_j & \cdots & S^{2m_j} \times B_j
 \end{pmatrix}
$$
\renewcommand{\arraystretch}{1}
is positive semi-definite for all even nonnegative integers $2m_1\le n_1$, $\ldots$, $2m_k \le n_k$, for all $A_1,\ldots,A_k \in \mathscr{S}$, and for all $B_1 \in \mathscr{S}^{n_1-2m_1},\ldots,B_k \in \mathscr{S}^{n_k-2m_k}$.
\end{theorem}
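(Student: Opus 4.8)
The plan is to unwind the $\square$-product into a single multi-indexed real matrix, evaluate each of its entries through the representation \eqref{eq:representationfiniteexchangeable} using the assumed nonnegativity of $\mu$, and then recognize the result as an integral of rank-one positive semi-definite matrices against a nonnegative measure. Since an integral of positive semi-definite matrices against a nonnegative measure is again positive semi-definite, this will conclude the proof; the whole argument rests on a single rank-one factorization that becomes visible after evaluating each block under a product measure.

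First I would analyze one class at a time. Index the rows and columns of the $j$-th factor matrix $M_j$ by $p,q\in\{0,1,\ldots,m_j\}$, so that its $(p,q)$ entry is the set $A_j^{2m_j-p-q}\times S^{p+q}\times B_j\in\mathscr{S}^{n_j}$. Evaluating this set under a product measure $\nu_j^{n_j}$ and using $\nu_j(S)=1$, the middle block $S^{p+q}$ contributes the factor $1$, so the entry becomes $\nu_j(A_j)^{2m_j-p-q}\,\nu_j^{n_j-2m_j}(B_j)$. Writing $t_j:=\nu_j(A_j)$ and $\beta_j:=\nu_j^{n_j-2m_j}(B_j)\ge 0$, the real Hankel matrix obtained by evaluating $M_j$ under $\nu_j^{n_j}$ equals $\beta_j\,\big(t_j^{2m_j-p-q}\big)_{p,q=0}^{m_j}$, which is precisely $\beta_j\,v_j v_j^{\mathsf{T}}$ for the vector $v_j:=(t_j^{m_j-p})_{p=0}^{m_j}$, since $t_j^{2m_j-p-q}=t_j^{m_j-p}\,t_j^{m_j-q}$. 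This is the key observation: each factor, when read against a product law, is a nonnegative multiple of a rank-one positive semi-definite matrix.

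Next I would propagate this through the Kronecker-like structure. Denoting multi-indices $P=(p_1,\ldots,p_k)$ and $Q=(q_1,\ldots,q_k)$, the $(P,Q)$ entry of $\square_{j=1}^k M_j$ is the Cartesian product $\prod_{j=1}^k (M_j)_{p_j,q_j}$, an element of $\mathscr{S}^{n}$ of product form along the classes $I_1,\ldots,I_k$. Applying $\mathrm{P}$ and the representation \eqref{eq:representationfiniteexchangeable} with $\mu\ge 0$ yields
$$
\mathrm{P}\Big(X\in\textstyle\prod_{j=1}^k (M_j)_{p_j,q_j}\Big)=\int_{\mathcal{P}(S)^k}\prod_{j=1}^k\nu_j^{n_j}\big((M_j)_{p_j,q_j}\big)\,\mu(\mathrm{d}\nu)=\int_{\mathcal{P}(S)^k} c(\nu)\,w_P(\nu)\,w_Q(\nu)\,\mu(\mathrm{d}\nu),
$$
where $\mu(\mathrm{d}\nu)$ abbreviates $\mu(\mathrm{d}\nu_1,\ldots,\mathrm{d}\nu_k)$, the scalar $c(\nu):=\prod_{j=1}^k\beta_j\ge 0$, and $w(\nu):=v_1\otimes\cdots\otimes v_k$ has multi-index entry $w_P(\nu)=\prod_{j=1}^k (v_j)_{p_j}$. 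Hence $\mathrm{P}\,\square_{j=1}^k M_j=\int c(\nu)\,w(\nu)w(\nu)^{\mathsf{T}}\,\mu(\mathrm{d}\nu)$, and for every real vector $x$ indexed by the multi-indices $P$ one obtains $x^{\mathsf{T}}\big(\mathrm{P}\,\square_{j=1}^k M_j\big)x=\int c(\nu)\,\big(w(\nu)^{\mathsf{T}}x\big)^2\,\mu(\mathrm{d}\nu)\ge 0$, because $c\ge 0$, the squared quantity is nonnegative, and $\mu$ is nonnegative. This is exactly the asserted positive semi-definiteness.

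The displayed computations are essentially bookkeeping once the factorization $\big(t_j^{2m_j-p-q}\big)_{p,q}=v_j v_j^{\mathsf{T}}$ is spotted, so I expect the only delicate point to be justifying that $\nu\mapsto c(\nu)\,w(\nu)w(\nu)^{\mathsf{T}}$ is a genuinely $\mu$-integrable, measurable matrix-valued map, which legitimizes interchanging the integral with the quadratic form. This reduces to the measurability of $\nu\mapsto\nu(A)$ (built into the $\sigma$-field on $\mathcal{P}(S)$) and of $\nu\mapsto\nu^{n-2m}(B)$, together with the uniform bound of $1$ on all the factors and the finiteness of $\mu$. I anticipate that this measurability is the main technical obstacle rather than the linear algebra, and that it is already available from the construction underlying Theorem \ref{th:finiterepresentation}.
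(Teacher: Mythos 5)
Your proof is correct. It follows the same overall strategy as the paper---evaluate each entry of the $\square$-product through the representation \eqref{eq:representationfiniteexchangeable} and recognize the resulting real matrix as an integral of positive semi-definite matrices against the nonnegative measure $\mu$---but it executes the key step differently. The paper's proof is two lines: it applies the representation to sets of the form $A_j^{p_j}\times B_j\times S^{r_j}$ and then delegates everything to Lemma \ref{lm:CBS}, instantiated with $f_j(\nu)=\nu(A_j)$ and $g_j(\nu)=\nu^{n_j-2m_j}(B_j)$. The proof of that lemma absorbs the $g_j$ into the measure, introduces auxiliary jointly independent copies $h_{j,i}$ distributed as $f_j$, averages over permutations of the columns, and identifies the result as a Kronecker product of Gram matrices $\textsl{H}_j\textsl{H}_j^\prime$. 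You bypass all of this by noting that the $j$-th Hankel block, evaluated at a product law, is already the rank-one matrix $\beta_j v_jv_j^{\mathsf{T}}$ pointwise in $\nu$ (via $t_j^{2m_j-p-q}=t_j^{m_j-p}t_j^{m_j-q}$), so the whole matrix is $\int c(\nu)\,w(\nu)w(\nu)^{\mathsf{T}}\,\mu(\mathrm{d}\nu)$ with $c\ge 0$; this is more elementary and, for this theorem, cleaner. What the paper's detour buys is a reusable lemma (also invoked in Remark \ref{cor:CBS} and in the proof of Corollary \ref{cor:reinfor}), but the underlying fact---a Hankel-type moment matrix is an integral of $vv^{\mathsf{T}}$ and hence positive semi-definite---is the same. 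Your closing points about measurability (settled by Remark \ref{lm:measurability}) and the boundedness of the entries needed to interchange the integral with the quadratic form are exactly the right loose ends, and both are available.
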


The main result of von Plato \cite[Theorem 4]{vonP} follows in the special case where $n=4$, $k=2$, $n_1=n_2=2$, $m_1=m_2=1$, $S=\{0,1\}$, and $A=\{1\}$; cf. Remark \ref{cor:CBS}. 


Moreover, it follows that, if $k=1$, $m_1=1$, and $\mathrm{P}(X \in S^2 \times B)>0$, then 
\begin{displaymath}
\begin{split}
\mathrm{P}(X_1 \in A| X_2 \in A, (X_3,\ldots,X_n) \in B)&=\frac{\mathrm{P}(X \in A^2\times B)}{\mathrm{P}(X \in A\times S \times B)} \\
&\ge \frac{\mathrm{P}(X \in A\times S\times B)}{\mathrm{P}(X \in S^2 \times B)}=\mathrm{P}(X_2 \in A| (X_3,\ldots,X_n) \in B).
\end{split}
\end{displaymath}
This can be intepreted as a \emph{reinforcement} property; see, e.g., \cite{MSW} and \cite{Pem}. The same observation applies to partially exchangeable sequences, once we set each $m_j=1$ in Theorem \ref{th:truemixtures}.


In the special case where $G$ is the symmetric group on $\{1,\ldots,n\}$ and $S=\{0,1\}$, we provide an explicit characterization of the finite exchangeable sequences which can be written as true mixtures of i.i.d. (Bernoulli) random variables. 

To this aim, define $x_i=\mathrm{P}(X_1=\cdots=X_i=0,X_{i+1}=\cdots=X_n=1)$ for each $i=0,1,\ldots,n$, and, for each positive integer $n$, the Hankel matrices $\textsl{H}_n=\left(h_{i,j}^{(n)}\right)$ and $\textsl{K}_n=\left(k_{i,j}^{(n)}\right)$ by
$$
h_{i,j}^{(n)}= \left\{
\begin{array}{*2{>{\displaystyle}l}p{5cm}}
\!\! \sum_{k=0}^{n+2-i-j}\binom{n+2-i-j}{k}x_k & \text{if } n\text{ is even and }1\le i,j\le \frac{n+2}{2} \\
\!\! \sum_{k=0}^{n+1-i-j}\binom{n+1-i-j}{k}x_k & \text{if } n\text{ is odd and }1\le i,j\le \frac{n+1}{2}
\end{array}
\!\!\right.,
$$
and
$$
k_{i,j}^{(n)}= \left\{
\begin{array}{*2{>{\displaystyle}l}p{5cm}}
\!\! \sum_{k=0}^{n-i-j}\binom{n-i-j}{k}x_{k+1} & \text{if } n\text{ is even and }1\le i,j\le \frac{n}{2} \\
\!\! \sum_{k=0}^{n+1-i-j}\binom{n+1-i-j}{k}x_{k+1} & \text{if } n\text{ is odd and }1\le i,j\le \frac{n+1}{2}
\end{array}
\!\!\right..
$$

\vspace{3mm}

With these premises, we obtain the following necessary and sufficient condition:

\begin{theorem}\label{th:positive}
Let $(X_1,\ldots,X_n)$ be an exchangeable sequence of $\{0,1\}$-valued random variables. Then the joint law of $(X_1,\ldots,X_n)$ is a true mixture of i.i.d. random variables if and only if $\textsl{H}_n$ and $\textsl{K}_n$ are positive semi-definite.
\end{theorem}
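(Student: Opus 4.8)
The plan is to reduce the statement to the \emph{reduced Hausdorff moment problem} on the interval $[0,1]$. Since $(X_1,\dots,X_n)$ is exchangeable and $\{0,1\}$-valued, its law is completely determined by the numbers $x_0,\dots,x_n$, and ``true mixture of i.i.d. Bernoulli laws'' means precisely that there is a nonnegative measure $\mu$ on $[0,1]$ with
\[
x_i=\int_0^1 p^{\,i}(1-p)^{\,n-i}\,\mu(\mathrm{d}p)\qquad(i=0,1,\dots,n),
\]
where $p$ is the probability that a single coordinate equals $0$; this matches the full joint law because both sides are exchangeable and agree on the representatives $0^i1^{\,n-i}$. Normalization of the probability law will force $\mu$ to be a probability measure automatically, so only nonnegativity is at stake.

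First I would perform the binomial change of variables. Setting $u_m:=\sum_{k=0}^m\binom{m}{k}x_k$ and invoking the identity $\sum_{k=0}^m\binom{m}{k}p^k(1-p)^{n-k}=(1-p)^{n-m}$, one obtains $u_m=\int_0^1(1-p)^{n-m}\mu(\mathrm{d}p)$. Pushing $\mu$ forward under $p\mapsto q:=1-p$ to a nonnegative measure $\widetilde\mu$ on $[0,1]$ and writing $v_\ell:=u_{n-\ell}$, the quantities $v_0,\dots,v_n$ become exactly the moments $v_\ell=\int_0^1 q^\ell\,\widetilde\mu(\mathrm{d}q)$. An analogous computation with $w_m:=\sum_{k=0}^m\binom{m}{k}x_{k+1}$ gives $w_m=\int_0^1 p(1-p)^{n-1-m}\mu(\mathrm{d}p)$. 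Substituting into the definitions of $\textsl{H}_n$ and $\textsl{K}_n$ and reindexing rows and columns from $1$ to $0$, I would verify the clean identifications: for $n$ even, $\textsl{H}_n=(v_{i+j})_{0\le i,j\le n/2}$ and $\textsl{K}_n=(v_{i+j+1}-v_{i+j+2})_{0\le i,j\le n/2-1}$; for $n$ odd, $\textsl{H}_n=(v_{i+j+1})$ and $\textsl{K}_n=(v_{i+j}-v_{i+j+1})$, both of size $(n+1)/2$. These are precisely the Hankel matrix and the localizing matrices attached to the polynomials $q(1-q)$ (for even $n$) and to $q,\ 1-q$ (for odd $n$), each nonnegative on $[0,1]$.

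The necessity direction is then immediate: if a representing $\widetilde\mu$ exists, then for every real vector $(\xi_a)$ one has $\sum_{a,b}\xi_a\xi_b\,v_{a+b}=\int_0^1\big(\textstyle\sum_a\xi_a q^a\big)^2\widetilde\mu(\mathrm{d}q)\ge 0$, and likewise each localizing form equals $\int_0^1 g(q)\big(\sum_a\xi_a q^a\big)^2\widetilde\mu(\mathrm{d}q)\ge 0$ with $g\in\{q,\,1-q,\,q(1-q)\}$, which is nonnegative on $[0,1]$. Hence $\textsl{H}_n$ and $\textsl{K}_n$ are positive semi-definite.

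For the converse I would invoke the classical solution of the reduced Hausdorff moment problem on $[0,1]$: positive semi-definiteness of exactly these Hankel and localizing matrices, in the formulation dictated by the parity of $n$ (that is, by whether the number of prescribed moments is even or odd), is \emph{sufficient} for the existence of a representing nonnegative measure $\widetilde\mu$ on $[0,1]$. Pulling $\widetilde\mu$ back under $q\mapsto 1-q$ yields the sought $\mu$, whose total mass equals $v_0=u_n=\sum_k\binom{n}{k}x_k=1$, so $\mu$ is a probability measure and the law is a true mixture. The main obstacle is precisely this last implication: the sufficiency is not elementary and rests entirely on moment theory for a \emph{compact} interval, where---in contrast with the Hamburger problem on the whole line---semidefiniteness of the finitely many canonical matrices does characterize membership in the moment cone. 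The remaining care is bookkeeping: matching the even/odd formulations, the exact matrix sizes, and confirming that $v_0$ (hence the normalization) is among the constrained moments in both parities.
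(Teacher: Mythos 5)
Your proposal is correct and follows essentially the same route as the paper: reduce to the reduced Hausdorff moment problem on $[0,1]$ via the binomial change of variables $y_m=\sum_{k\le m}\binom{m}{k}x_k$, identify $\textsl{H}_n$ and $\textsl{K}_n$ with the Hankel moment matrix and the localizing matrix in the parity-dependent formulation, and invoke the classical semidefiniteness criterion (Jurkat) for sufficiency. The only (harmless) difference is that you obtain the moment identity directly from $\sum_{k=0}^m\binom{m}{k}p^k(1-p)^{n-k}=(1-p)^{n-m}$, whereas the paper inverts the binomial system by induction using its Lemma~\ref{lm:identity}.
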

Interestingly, Wood \cite[Theorem 2]{Wood} calculated the probability that, picking at random an exchangeable sequence of $\{0,1\}$-valued random variables $(X_1,\ldots,X_n)$, this is infinitely extendible; in particular, this probability goes to $0$ as $n\to \infty$. 

Lastly, we obtain the following corollary, which turns out to be a generalization of \cite[Theorem 1 and 2]{vonP}:
\begin{corollary}\label{cor:reinfor}
Let $(X_1,\ldots,X_n)$ be a $\{0,1\}$-valued exchangeable sequence which can be represented as a true mixture of i.i.d. random variables. Then, for all $i=1,\ldots,n-1$, it holds
\begin{equation}\label{eq:reinf}
x_i \le \sqrt{x_{i-1}x_{i+1}}.
\end{equation}
On the other hand, these conditions are also sufficient if and only if $n \le 3$. 
\end{corollary}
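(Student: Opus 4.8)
The plan is to prove the three separate assertions contained in the statement: that \eqref{eq:reinf} is necessary for every $n$, that it is sufficient when $n\le 3$, and that it fails to be sufficient once $n\ge 4$; throughout I would lean on the characterization of Theorem \ref{th:positive}. For the necessity I would use that a true mixture means $x_i=\int_0^1 t^i(1-t)^{n-i}\,\mu(\mathrm{d}t)$ for some probability measure $\mu$ on $[0,1]$. Setting $g=t^{(i-1)/2}(1-t)^{(n-i+1)/2}$ and $h=t^{(i+1)/2}(1-t)^{(n-i-1)/2}$, one has $\int g^2\,\mathrm{d}\mu=x_{i-1}$, $\int h^2\,\mathrm{d}\mu=x_{i+1}$, and $gh=t^i(1-t)^{n-i}$, so that $x_i=\int gh\,\mathrm{d}\mu$; the Cauchy--Schwarz inequality then yields $x_i^2\le x_{i-1}x_{i+1}$, which is exactly \eqref{eq:reinf}. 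It is worth recording that \eqref{eq:reinf} is equivalent to the log-convexity of $(x_i)$, i.e. to the ratios $x_{i+1}/x_i$ being non-decreasing.

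For the sufficiency when $n\le 3$, I would compute $\textsl{H}_n$ and $\textsl{K}_n$ directly from their definitions and observe that for $n\le 3$ they have size at most $2$. Their diagonal entries are nonnegative combinations of the probabilities $x_k\ge 0$, hence nonnegative, and the relevant $2\times 2$ determinants equal expressions of the form $x_{i-1}x_{i+1}-x_i^2$: for $n=2$ only $\textsl{H}_2$ is nontrivial, with $\det\textsl{H}_2=x_0x_2-x_1^2$, while for $n=3$ one gets $\det\textsl{H}_3=x_0x_2-x_1^2$ and $\det\textsl{K}_3=x_1x_3-x_2^2$. Since a symmetric $2\times 2$ matrix with nonnegative diagonal is positive semi-definite precisely when its determinant is nonnegative, both $\textsl{H}_n$ and $\textsl{K}_n$ are positive semi-definite if and only if \eqref{eq:reinf} holds, and Theorem \ref{th:positive} then delivers the true mixture.

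For the failure when $n\ge 4$, I would first record the elementary stability fact that conditioning on the event $\{X_{k+1}=\cdots=X_n=1\}$ maps a true mixture at level $n$ to a true mixture at level $k$: if the law is represented by $\mu\ge 0$, then the conditional law of $(X_1,\ldots,X_k)$ is represented by the still-nonnegative measure proportional to $t^{\,n-k}\,\mu(\mathrm{d}t)$. Next I would exhibit a single non-extendible instance at level $4$, namely the (unnormalised) sequence $(x_0,\ldots,x_4)=(1,\tfrac12,\tfrac{3}{10},\tfrac15,\tfrac{27}{200})$: its ratios $\tfrac12<\tfrac35<\tfrac23<\tfrac{27}{40}$ are increasing, so \eqref{eq:reinf} holds, yet a direct computation from the definition shows that the $3\times 3$ Hankel matrix $\textsl{H}_4$ satisfies $\det\textsl{H}_4=-\tfrac{1}{4000}<0$, whence by Theorem \ref{th:positive} this law is not a true mixture. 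Finally, for an arbitrary $n\ge 4$ I would extend this prefix to a full nonnegative log-convex sequence, for instance by continuing with a constant ratio $r\in[\tfrac{27}{40},1)$, say $x_i=x_4\,(\tfrac{7}{10})^{\,i-4}$ for $i\ge 4$, and then normalise so that $\sum_i\binom{n}{i}x_i=1$. The resulting level-$n$ law satisfies \eqref{eq:reinf} by construction, but its conditional given $\{X_5=\cdots=X_n=1\}$ is a positive scalar multiple of the bad level-$4$ sequence, hence not a true mixture; by the conditioning remark, neither is the level-$n$ law.

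The conceptual point, and the source of the threshold $n=3$, is that for $n\le 3$ the matrices $\textsl{H}_n,\textsl{K}_n$ are of size at most $2$, so positive semi-definiteness reduces to the contiguous $2\times 2$ (log-convexity) minors encoded by \eqref{eq:reinf}, whereas for $n\ge 4$ they become $3\times 3$ and carry strictly more information than those minors. I expect the main obstacle to be the production of the explicit level-$4$ counterexample together with the verification that it lies in the interior of the log-convex cone while having $\det\textsl{H}_4<0$; once this is in hand, the remaining ingredients — the Cauchy--Schwarz bound, the small-matrix computations, the conditioning-stability observation, and the log-convex extension — are routine.
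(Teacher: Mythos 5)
Your proof is correct, and it rests on the same two pillars as the paper's argument (Theorem \ref{th:positive} for the sufficiency and insufficiency claims, and a positivity argument for the necessity), but both halves are executed differently. For the necessity, the paper invokes Lemma \ref{lm:CBS} with $k=1$ and $m_1=\lfloor n/2\rfloor$ to obtain positive semi-definiteness of the full Hankel matrix in the $x_i$, reads off the contiguous $2\times 2$ minors, and then needs a separate appeal to the leading $2\times 2$ minor of $\textsl{K}_n$ to cover the index $i=n-1$ when $n$ is odd; your direct Cauchy--Schwarz argument with $g=t^{(i-1)/2}(1-t)^{(n-i+1)/2}$ and $h=t^{(i+1)/2}(1-t)^{(n-i-1)/2}$ is more economical and treats all $i$ uniformly with no parity split --- it is in effect the $2\times 2$ case of Lemma \ref{lm:CBS}, cf.\ Remark \ref{cor:CBS}. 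For the failure of sufficiency when $n\ge 4$, the paper exhibits the single family $(x_0,\ldots,x_n)=\tfrac{1}{3\cdot 2^n+2n+6}(9,5,3,\ldots,3)$ and checks in one stroke that a $3\times 3$ south-east principal minor of $\textsl{H}_n$ is negative for every $n\ge 4$; you instead certify a level-$4$ counterexample (your value $\det \textsl{H}_4=-1/4000$ for the unnormalised sequence $(1,\tfrac12,\tfrac{3}{10},\tfrac15,\tfrac{27}{200})$ is correct, and scaling does not affect the sign) and propagate it to all $n\ge 4$ via a geometric log-convex extension combined with the observation that conditioning on $\{X_{k+1}=\cdots=X_n=1\}$ sends true mixtures to true mixtures. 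That conditioning lemma is valid and is a structural point absent from the paper; the only blemish is that with your convention $x_i=\int_0^1 t^i(1-t)^{n-i}\,\mu(\mathrm{d}t)$ the reweighting density is $(1-t)^{n-k}$ rather than $t^{n-k}$, which is immaterial since either is nonnegative. Your explicit $2\times 2$ computations of $\textsl{H}_n$ and $\textsl{K}_n$ for $n\le 3$ (determinants $x_0x_2-x_1^2$ and $x_1x_3-x_2^2$) are what the paper leaves implicit, and they check out. In short: the paper's route is slightly more compact because one formula covers all $n\ge 4$ at once, while yours buys a uniform necessity proof and a reusable reduction-to-small-$n$ principle.
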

An alternative proof of the first part of Corollary \ref{cor:reinfor} has been given in a manuscript of Muliere and Walker \cite{MulW}, which, however, focuses on the relationship between the concepts of reinforcement and finite exchangeability. Proofs of Theorem \ref{th:truemixtures} and Theorem \ref{th:positive} follow in Section \ref{sec:truemixtures}.

\section{Notations and Preliminaries}\label{sec:notations}

The sets of reals and positive integers are denoted, respectively, by $\mathbf{R}$ and $\NNb$. Each of these sets is endowed with its usual addition, multiplication, and (total) order $\le$.

Given sets $X,Y,Z$ and functions $f:X\to Y$ and $g:Y\to Z$, we write $f[X]$ for the image set (or range) of $f$, namely $f[X] := \{f(x): x \in X\} \subseteq Y$, and $g\circ f$ for the composition $X\to Z: x \mapsto g(f(x))$. Moreover, for each nonempty subset $A\subseteq X$, the indicator function $\bm{1}_A: X\to \{0,1\}$ is defined by $\bm{1}_A(x)=1$ if and only if $x \in A$.

Given a probability space $(X,\Sigma,\mu)$ and a measurable space $(Y,\mathscr{G})$, we say that two random variables $\alpha,\beta:X \to Y$ have the same distribution, shortened with $\alpha\overset{d}{=}\beta$, whenever $\mu \circ \alpha^{-1}=\mu \circ \beta^{-1}$. Also, for each $x \in X$, let $\delta_x$ be the Dirac measure at $x$, that is, the probability measure $\Sigma \to \mathbf{R}: A\mapsto \bm{1}_A(x)$. Lastly, the symbol $\otimes$ will be reserved for the Kronecker product. We refer to \cite{Eng}, \cite{Horn}, \cite{Kall02}, and \cite{RudinFA}, respectively, for basic aspects of topology, matrix analysis, probability theory, and functional analysis (including notation and terms not defined here).


\begin{remark}\label{lm:measurability}
Notice that the integral on the right hand side of \eqref{eq:representationfiniteexchangeable} is well defined. Indeed, for each $A_1 \in \mathscr{S}^{n_1},\ldots,A_k \in \mathscr{S}^{n_k}$, each map $(\nu_1,\ldots,\nu_k) \mapsto \nu_1^{n_1}(A_1)\cdots \nu_k^{n_k}(A_k)$ is measurable.

To this aim, it will be sufficient to show the statement for $k=1$. The $\sigma$-field defined on $\mathcal{P}(S)$, hereafter shortened with $\sigma(\mathcal{P}(S))$, may be written explicitly as $\sigma\left(\{\nu \in \mathcal{P}(S): \nu(A) \in B\}: A \in \mathscr{S}, B \in \mathscr{B}\right)$, where $\mathscr{B}$ stands for the usual Borel $\sigma$-field on $\mathbf{R}$. Then it is claimed that the map $\mathcal{P}(S)\to \mathbf{R}$ defined by $\nu \mapsto \nu^n(A)$ is measurable for each $n \in \mathbf{N}$ and $A \in \mathscr{S}^n$, that is, $\{\nu \in \mathcal{P}(S): \nu^n(A) \in B\}$ belongs to $\sigma(\mathcal{P}(S))$ for each $B \in \mathscr{B}$. Let $\mathscr{E}$ be the collection of all $A \in \mathscr{S}^n$ such that the mapping $\nu \mapsto \nu^n(A)$ is measurable. Then $\mathscr{E}$ contains all rectangles $A=A_1 \times \cdots \times A_n$ with $A_1,\ldots,A_n \in \mathscr{S}$, as far as the product of measurable functions $\nu \mapsto \nu(A_i)$ is measurable. In particular, $\mathscr{E}$ contains $S^n$. Moreover, it is easily seen that $\mathscr{E}$ is closed under finite intersection and countable disjoint union. Therefore, it follows by the monotone class theorem that $\mathscr{E}=\mathscr{S}^n$. 
\end{remark}

\begin{remark}\label{lm:Gproduct}
As anticipated in the Introduction, the type of partial exchangeability used in Theorem \ref{th:finiterepresentation} could be termed \emph{subgroup exchangeability}. 
Indeed, let $G$ be a subgroup of the symmetric group on $\{1,\ldots,n\}$. Then there exists a unique partition $\{I_1,\ldots,I_k\}$ of $\{1,\ldots,n\}$ such that $G$ is equal to the product of the symmetric groups on $I_1,\ldots,I_k$.

This is trivially true for $n=1$. Let us assume that it holds for $n-1$ and consider all permutations of $G$ which leaves the element $n$ unchanged. This subset, being isomorphic to a subgroup of the symmetric group on $\{1,\ldots,n-1\}$, can be written as the group the generated by the symmetric groups on $I_1,\ldots,I_k$, for a necessarily unique partition $\{I_1,\ldots,I_k\}$ of $\{1,\ldots,n-1\}$. If the subset is equal to $G$ itself, then the required partition equals $\{I_1,\ldots,I_k, \{n\}\}$; otherwise, let $J$ be the subset of integers in $\{1,\ldots,n-1\}$ which are connected with $n$, for some permutations in $G$. Then the required partition is a coarsening of $\{I_1,\ldots,I_k, \{n\}\}$, obtained making the union of $\{n\}$ with all $I_j$ containing at least one element of $J$. The claim follows by induction.
\end{remark}

\begin{remark}\label{rmk:Gtrivial}
Notice that if $G$ is the trivial subgroup of the symmetric group on $\{1,\ldots,n\}$, then \emph{all} joint laws of random vectors $(X_1,\ldots,X_n)$ are exchangeable over $G$. Hence, Theorem \ref{th:finiterepresentation} implies the existence of a bounded signed measure $\mu$ such that for all $A_1,\ldots,A_n \in \mathscr{S}$ it holds
$$
\mathrm{P}\left(X_1 \in A_1,\ldots,X_n \in A_n\right)=\int_{\mathcal{P}(S)^n} \nu_1(A_1) \cdots \nu_n(A_n) \,\mu(\mathrm{d}\nu_1,\ldots,\mathrm{d}\nu_n).
$$
This is not surprising. Indeed, joint laws can be always represented as \emph{true} mixtures of product laws. To prove this, denote by $\textsl{q}$ the measurable map $S^n \to \mathcal{P}(S)^n:(s_1,\ldots,s_n)\mapsto (\delta_{s_1},\ldots,\delta_{s_n})$. Then, it follows that, for all measurable sets $A_1,\ldots,A_n \in \mathscr{S}$, the probability $\mathrm{P}(X_1 \in A_1,\ldots,X_n \in A_n)$ is equal to
\begin{displaymath}
\begin{split}
\int_{S^n}\bm{1}_{A_1\times \cdots \times A_n}(s_1,\ldots,s_n)\mathrm{P}(\mathrm{d}s_1,\ldots,\mathrm{d}s_n)&=\int_{S^n}\delta_{s_1}(A_1) \cdots \delta_{s_n}(A_n)\mathrm{P}(\mathrm{d}s_1,\ldots,\mathrm{d}s_n) \\
&=\int_{\mathcal{P}(S)^n}\nu_1(A_1) \cdots \nu_n(A_n)(\mathrm{P}\circ \textsl{q}^{-1})(\mathrm{d}\nu_1,\ldots,\mathrm{d}\nu_n).
\end{split}
\end{displaymath}
\end{remark}

The next identity will be useful in the proof of Theorem \ref{th:positive}:
\begin{lemma}\label{lm:identity}
Let $n$ be a positive integer. Then, for each nonnegative integer $j\le n-1$ it holds
$$
\sum_{i=j}^{n-1}\binom{n}{i}\binom{i}{j}(-1)^i=\binom{n}{j}(-1)^{n-1}.
$$
\end{lemma}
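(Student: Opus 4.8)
The plan is to collapse the product of two binomial coefficients into a single one and then reduce everything to one application of the binomial theorem. The starting point is the classical ``subset of a subset'' identity
$$\binom{n}{i}\binom{i}{j}=\binom{n}{j}\binom{n-j}{i-j},$$
valid for $0\le j\le i\le n$ and checked at once from the factorial expressions. Substituting this into the left-hand side of the claimed identity and factoring out $\binom{n}{j}$, which does not depend on the summation index, reduces the problem to evaluating the alternating sum $\sum_{i=j}^{n-1}\binom{n-j}{i-j}(-1)^i$.

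The cleanest way to finish is to extend the range of summation to include the term $i=n$ and then subtract it off. Writing $\ell=i-j$, so that $(-1)^i=(-1)^j(-1)^\ell$, the \emph{complete} sum over $i=j,\ldots,n$ becomes $\binom{n}{j}(-1)^j\sum_{\ell=0}^{n-j}\binom{n-j}{\ell}(-1)^\ell$. Since the hypothesis $j\le n-1$ guarantees $n-j\ge 1$, the binomial theorem gives $\sum_{\ell=0}^{n-j}\binom{n-j}{\ell}(-1)^\ell=(1-1)^{n-j}=0$, so this complete sum vanishes. Consequently the original sum, which omits the top index, equals the negative of the missing $i=n$ summand:
$$\sum_{i=j}^{n-1}\binom{n}{i}\binom{i}{j}(-1)^i=-\binom{n}{n}\binom{n}{j}(-1)^n=\binom{n}{j}(-1)^{n-1},$$
which is exactly the asserted identity.

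I do not expect a genuine obstacle here; the only points requiring care are the admissibility of the two binomial identities at the boundary and the sign bookkeeping in the last display. The condition $j\le n-1$ is precisely what ensures $n-j\ge 1$, so that the full alternating sum is over a nontrivial set and genuinely vanishes; without it the argument would break at $j=n$. As a consistency check one may test the extreme case $j=n-1$, where the left-hand side collapses to the single summand $\binom{n}{n-1}\binom{n-1}{n-1}(-1)^{n-1}=n(-1)^{n-1}$, matching the right-hand side.
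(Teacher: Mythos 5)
Your proof is correct and follows essentially the same route as the paper: both reduce to showing the completed sum $\sum_{i=j}^{n}\binom{n}{i}\binom{i}{j}(-1)^i$ vanishes, via the identity $\binom{n}{i}\binom{i}{j}=\binom{n}{j}\binom{n-j}{i-j}$ and the binomial expansion of $(1-1)^{n-j}$. Your write-up is slightly more explicit about where the hypothesis $j\le n-1$ is used and about the sign of the subtracted $i=n$ term, but the argument is the same.
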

\begin{proof}
We have equivalently to prove that $\sum_{i=j}^n\binom{n}i\binom{i}j(-1)^i=0$. To this aim, rewrite it as
$$
\sum_{i=j}^n\binom{n}i\binom{i}j(-1)^i=\binom{n}{j}\sum_{i=j}^n \binom{n-j}{i-j}(-1)^i=(-1)^j\binom{n}{j}\sum_{k=0}^{n-j}\binom{n-j}k(-1)^{k}.
$$
The claim follows by the fact that the last sum is the binomial expansion of $(1-1)^{n-j}$.
\end{proof}

We conclude the section with a result about positive semi-definite matrices:
\begin{lemma}\label{lm:CBS} 
Let $(X,\mathscr{F},\mu)$ be a finite (nonnegative) measure space and let $f_1,g_1,\ldots,f_k,g_k:X\to \mathbf{R}$ be nonnegative measurable functions. Fix also positive integers $n_1,\ldots,n_k$ such that $f_1^{i_1}g_1\cdots f_k^{i_k}g_k$ are integrable for all nonnegative integers $i_1\le 2n_1,\ldots,i_k\le 2n_k$, and define the matrix $\textsl{M}=(m_{i,j})$ by
\renewcommand{\arraystretch}{1.3}
$$\bigotimes_{j=1}^k
\begin{pmatrix}
   f_j^{2n_j}g_j & f_j^{2n_j-1}g_j & \cdots & f_j^{n_j}g_j \\
  f_j^{2n_j-1}g_j & f_j^{2n_j-2}g_j & \cdots & f_j^{n_j-1}g_j \\
  \vdots  & \vdots  & \ddots & \vdots  \\
  f_j^{n_j}g_j & f_j^{n_j-1}g_j & \cdots & g_j
 \end{pmatrix}.\renewcommand{\arraystretch}{1}
$$
Then, $\left(\int_X m_{i,j} \,\mathrm{d}\mu\right)$ is positive semi-definite. 
\end{lemma}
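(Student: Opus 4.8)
The plan is to prove that the matrix-valued integrand is positive semi-definite at every point of $X$ and then to integrate against the nonnegative measure $\mu$. The key observation is that each of the $k$ Hankel factors is, pointwise, a nonnegative multiple of a rank-one symmetric matrix. Concretely, for the $j$-th factor the $(a,b)$ entry (with $a,b\in\{1,\ldots,n_j+1\}$) is
\[
f_j^{\,2n_j-(a-1)-(b-1)}\,g_j \;=\; f_j^{\,n_j-a+1}\,f_j^{\,n_j-b+1}\,g_j,
\]
so, setting $v_j:=\left(f_j^{\,n_j},f_j^{\,n_j-1},\ldots,f_j,1\right)^{\mathsf{T}}$, the $j$-th factor equals $g_j\,v_jv_j^{\mathsf{T}}$ at every point of $X$.

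Next I would assemble the Kronecker product. Using the transpose rule $(A\otimes B)^{\mathsf{T}}=A^{\mathsf{T}}\otimes B^{\mathsf{T}}$ together with the mixed-product property $(A\otimes B)(C\otimes D)=(AC)\otimes(BD)$, one gets $\bigotimes_{j=1}^k\!\left(v_jv_j^{\mathsf{T}}\right)=w\,w^{\mathsf{T}}$ with $w:=\bigotimes_{j=1}^k v_j$, and hence
\[
\textsl{M}=\bigotimes_{j=1}^k\!\left(g_j\,v_jv_j^{\mathsf{T}}\right)=G\,w\,w^{\mathsf{T}},\qquad G:=\prod_{j=1}^k g_j\ge 0,
\]
pointwise on $X$. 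Since $G\ge 0$ and $w\,w^{\mathsf{T}}$ is positive semi-definite, the matrix $\textsl{M}$ is positive semi-definite at every point.

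Finally, for any real column vector $c$ of size $\prod_{j}(n_j+1)$, linearity of the integral yields
\[
c^{\mathsf{T}}\!\left(\int_X \textsl{M}\,\mathrm{d}\mu\right)\!c=\int_X c^{\mathsf{T}}\textsl{M}\,c\,\mathrm{d}\mu=\int_X G\,(w^{\mathsf{T}}c)^2\,\mathrm{d}\mu\ge 0,
\]
the interchange being legitimate because every entry of $\textsl{M}$ has the form $f_1^{\,i_1}g_1\cdots f_k^{\,i_k}g_k$ with $0\le i_j\le 2n_j$, which is integrable by hypothesis, and because $\mu$ is nonnegative. This proves that $\left(\int_X m_{i,j}\,\mathrm{d}\mu\right)$ is positive semi-definite.

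I do not expect a genuine obstacle here: the argument is a pointwise rank-one factorization followed by integration against a nonnegative measure. The only points requiring care are the bookkeeping of the Kronecker indexing, so that the factorization $\textsl{M}=G\,w\,w^{\mathsf{T}}$ is literally the displayed matrix (in particular that $w$ is taken in the same order $j=1,\ldots,k$ as the factors), and verifying that the integrability assumption covers exactly the entries appearing in $\textsl{M}$, which it does since each exponent of $f_j$ lies between $0$ and $2n_j$.
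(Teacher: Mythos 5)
Your proof is correct, but it takes a genuinely different and more direct route than the paper. You observe that the $j$-th Hankel factor is pointwise $g_j\,v_jv_j^{\mathsf{T}}$ with $v_j=(f_j^{n_j},\ldots,f_j,1)^{\mathsf{T}}$, use the mixed-product property to collapse the Kronecker product into $G\,ww^{\mathsf{T}}$, and integrate the resulting pointwise positive semi-definite integrand against the nonnegative measure; this is the classical quadratic-form argument for moment matrices, $c^{\mathsf{T}}\left(\int \textsl{M}\,\mathrm{d}\mu\right)c=\int G\,(w^{\mathsf{T}}c)^2\,\mathrm{d}\mu\ge 0$. The paper instead first reduces to $g_1=\cdots=g_k=\bm{1}_X$ by a change of measure with density $g_1\cdots g_k$, normalizes $\mu$ to a probability measure, and then runs a symmetrization argument: it introduces jointly independent copies $h_{j,i}$ distributed as $f_j$, substitutes them column by column, averages over all permutations of the columns, and recognizes the averaged matrix as $\bigotimes_j \textsl{H}_j\textsl{H}_j^{\prime}$, a Kronecker product of Gram matrices. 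The two proofs land in the same place (a sum of rank-one symmetric matrices), but yours avoids the auxiliary randomization entirely --- in particular it sidesteps the need to realize independent copies of the $f_j$'s, which strictly speaking requires enlarging the underlying space --- and it needs no reduction step for the $g_j$'s. Your dimension count $\prod_j(n_j+1)$ for the test vector is also the correct one. The only thing to make explicit, which you already flag, is the Kronecker indexing ensuring that the $(a,b)$ entry of the $j$-th factor is $f_j^{\,n_j-a+1}f_j^{\,n_j-b+1}g_j$; once that is written down, the argument is complete.
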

\begin{proof}
Let $\nu$ be the function $\mathscr{F} \to \mathbf{R}: F \mapsto \int_Fg_1\cdots g_k\,\mathrm{d}\mu$. Then, it is straightforward to check that $\nu$ is a finite (nonnegative) measure such that 
$$
\int_X h g_1\cdots g_k \,\mathrm{d}\mu=\int_Xh\,\mathrm{d}\nu
$$
whenever $h: X\to \mathbf{R}$ is measurable. It means that it is enough to prove the claim for $g_1=\cdots=g_k=\bm{1}_X$. In addition, the statement is trivial if $\mu(X)=0$, hence we can assume without loss of generality that $\mu$ is a probability measure on $X$.

At this point, for each $\bm{z} \in \mathbf{R}^{n_1\cdots n_k}$, we have to show that $\bm{z}^\prime \left(\int_X m_{i,j} \,\mathrm{d}\mu\right) \bm{z}$ is nonnegative, where $\bm{z}^\prime$ stands for the transpose of $\bm{z}$. Accordingly, define the measurable functions $h_{j,1},\ldots,h_{j,n_1\cdots n_k}:X \to \mathbf{R}$, for each $j=1,\ldots,k$, such that they have distribution $f_j$ and all $h_{j,i}$'s are jointly independent. Consider the following Kronecker product
\renewcommand{\arraystretch}{1.3}
$$\bigotimes_{j=1}^k
\begin{pmatrix}
   f_j^{2n_j} & f_j^{2n_j-1} & \cdots & f_j^{n_j} \\
  f_j^{2n_j-1} & f_j^{2n_j-2} & \cdots & f_j^{n_j-1} \\
  \vdots  & \vdots  & \ddots & \vdots  \\
  f_j^{n_j} & f_j^{n_j-1} & \cdots & 1
 \end{pmatrix}\renewcommand{\arraystretch}{1}
$$
and replace each function $f_j$ in the $i$-th column with a $h_{j,i}$, for all $i=1,\ldots,n_1\cdots n_k$. Averaging over all $(n_1\cdots n_k)!^k$ permutations of the columns, we obtain by a symmetric argument that the above matrix is equal to
\renewcommand{\arraystretch}{1.3}
$$\frac{1}{(n_1\cdots n_k)!^k} \bigotimes_{j=1}^k \sum_{i=1}^{n_1\cdots n_k}
\begin{pmatrix}
  h_{j,i}^{2n_j} & h_{j,i}^{2n_j-1} & \cdots & h_{j,i}^{n_j} \\
  h_{j,i}^{2n_j-1} & h_{j,i}^{2n_j-2} & \cdots & h_{j,i}^{n_j-1} \\
  \vdots  & \vdots  & \ddots & \vdots  \\
  h_{j,i}^{n_j} & h_{j,i}^{n_j-1} & \cdots & 1
 \end{pmatrix}.\renewcommand{\arraystretch}{1}
$$

Taking the expected values at each entry, and using independence, it follows that
\begin{displaymath}
\begin{split}
\bm{z}^\prime \left(\int_X m_{i,j} \,\mathrm{d}\mu\right) \bm{z}&=\frac{1}{(n_1\cdots n_k)!^k}\bm{z}^\prime\left(\int_X\bigotimes_{j=1}^k \sum_{i=1}^{n_1\cdots n_k}
\renewcommand{\arraystretch}{1.3}
\begin{pmatrix}
  h_{j,i}^{2n_j} & h_{j,i}^{2n_j-1} & \cdots & h_{j,i}^{n_j} \\
  h_{j,i}^{2n_j-1} & h_{j,i}^{2n_j-2} & \cdots & h_{j,i}^{n_j-1} \\
  \vdots  & \vdots  & \ddots & \vdots  \\
  h_{j,i}^{n_j} & h_{j,i}^{n_j-1} & \cdots & 1
	\end{pmatrix}\,\mathrm{d}\mu\right)\bm{z}\\
	&=\frac{1}{(n_1\cdots n_k)!^k}\int_X\bm{z}^\prime\bigotimes_{j=1}^k \sum_{i=1}^{n_1\cdots n_k}
\begin{pmatrix}
  h_{j,i}^{2n_j} & h_{j,i}^{2n_j-1} & \cdots & h_{j,i}^{n_j} \\
  h_{j,i}^{2n_j-1} & h_{j,i}^{2n_j-2} & \cdots & h_{j,i}^{n_j-1} \\
  \vdots  & \vdots  & \ddots & \vdots  \\
  h_{j,i}^{n_j} & h_{j,i}^{n_j-1} & \cdots & 1
	\end{pmatrix}\bm{z}\,\mathrm{d}\mu\\
	&=\frac{1}{(n_1\cdots n_k)!^k}\int_X\bm{z}^\prime\bigotimes_{j=1}^k \textsl{H}_j \textsl{H}_j^\prime\,\bm{z}\,\mathrm{d}\mu,
	\renewcommand{\arraystretch}{1}
\end{split}
\end{displaymath}
where we define
$$
\textsl{H}_j=
\renewcommand{\arraystretch}{1.3}
\begin{pmatrix}
  h_{j,1}^{n_j} & h_{j,2}^{n_j} & \cdots & h_{j,n_1\cdots n_k}^{n_j} \\
  h_{j,1}^{n_j-1} & h_{j,2}^{n_j-1} & \cdots & h_{j,n_1\cdots n_k}^{n_j-1} \\
  \vdots  & \vdots  & \ddots & \vdots  \\
  1 & 1 & \cdots & 1
	\end{pmatrix}
	\renewcommand{\arraystretch}{1}
$$
for each $j=1,\ldots,k$. This is clearly nonnegative since the Kronecker product of positive semi-definite matrices is positive semi-definite.
\end{proof}


\begin{remark}\label{cor:CBS}
It turns out that Lemma \ref{lm:CBS} is the determinant analogue of the Cauchy--Schwarz's inequality \cite[Chapter 2.6]{Mitri}, indeed: Let $(X,\mathscr{F},\mu)$ be a finite measure space and let $f,g:X\to \mathbf{R}$ be measurable nonnegative square-integrable functions. Then $fg$ is integrable and
$$
\left(\int_X fg\,\mathrm{d}\mu\right)^2 \le \left(\int_X f^2\,\mathrm{d}\mu\right)\left(\int_X g^2\,\mathrm{d}\mu\right).
$$

To this aim, set $k=2$, $n_1=n_2=1$ and $g_1=g_2=\bm{1}_X$ in Lemma \ref{lm:CBS} and assume without loss of generality that $\mu$ is a probability measure (the case $\mu(X)=0$ being trivial). Then, setting $f_1=f$ and $f_2=g$, we obtain that the matrix with entries given by the $\mu$-integrals of each entry in
$$
\begin{pmatrix}
  f^{2} & f \\
  f & 1
 \end{pmatrix} \mathlarger{\otimes}
\begin{pmatrix}
  g^{2} & g \\
  g & 1
 \end{pmatrix}
$$
is positive semi-definite. The claim follows by Sylvester's criterion \cite[Theorem 7.2.5]{Horn}, indeed the principal minor
$$
\renewcommand{\arraystretch}{1.3}
\begin{pmatrix}
  \int_X g^2\,\mathrm{d}\mu & \int_X fg\,\mathrm{d}\mu  \\
   \int_X fg\,\mathrm{d}\mu  & \int_X f^{2}\,\mathrm{d}\mu  
 \end{pmatrix}
\renewcommand{\arraystretch}{1}
$$
has a nonnegative determinant.
\end{remark}

\section{Proofs of Theorem \ref{th:finiterepresentation} and Corollary \ref{cor:parametric2}}\label{sec:proofs}


\begin{proof}[Proof of Theorem \ref{th:finiterepresentation}]
Denoting with $n_j$ the cardinality of $I_j$, we obtain by the exchangeability assumption that
$$
\mathrm{P}\left((X_i: i \in I_j) \in A_j, j=1,\ldots,k\right)=|G|^{-1}\sum_{\sigma \in G} \mathrm{P}\left(\sigma X \in A\right)=\int_\Omega \bm{U}_{X}(A)\,\mathrm{d}\mathrm{P}
$$
for each $A_1 \in \mathscr{S}^{n_1}, \ldots,A_k \in \mathscr{S}^{n_k}$, where $A=A_1\times \cdots A_k$, $\sigma X=(X_{\sigma(1)},\ldots,X_{\sigma(n)})$, and $\bm{U}_{X}(A):\Omega \to \mathcal{P}(S)$ is the mapping defined by the arithmetic mean of empirical distributions over permutations on $G$, i.e.,
$$
\omega \mapsto |G|^{-1}\sum_{\sigma \in G}\delta_{\sigma X(\omega)}(A).
$$

Let $\mathscr{T}$ be the $\sigma$-field generated by $\mathscr{S}$ and the singletons in $S$. Moreover, given $x=((x_i): i \in I_j, j=1,\ldots,k) \in S^n$, let $\nu_j(x)$ be the point measure on $S$ defined by $\sum_{i \in I_j}\delta_{x_i}$ for each $j=1,\ldots,k$. It follows by the exchangeability assumption that, for each $x \in S^n$ and $A \in \mathscr{S}^n$, it holds
$$
\bm{U}_x(A)=|\mathcal{S}(x)|^{-1}\sum_{y \in \mathcal{S}(x)}\delta_y(A),
$$
where $\mathcal{S}(x)$ stands for the set of all $y\in S^n$ such that $\nu_j(x)=\nu_j(y)$ for each $j=1,\ldots,k$. Observe that the set $\mathcal{S}(x)$ has exactly $\binom{n}{\nu_1(x),\ldots,\nu_k(x)}$ elements, where 
$$
\binom{n}{\lambda_1,\ldots,\lambda_k}=\frac{n!}{\prod_{j=1}^k \prod_{s \in S} \lambda_j\{s\}!}
$$
whenever $\lambda_1,\ldots,\lambda_k$ are point measures on $S$ such that $\lambda_j(S)=n_j$ for each $j=1,\ldots,k$. In other words, $\bm{U}_x$ represents the uniform distribution on $\mathcal{S}(x)$. 

Hereafter, the set of these point measures $\lambda=(\lambda_1,\ldots,\lambda_k)$ will be denoted by $\mathscr{L}$. In particular, for each $\lambda \in \mathscr{L}$, the product measure 
$$
\bigtimes_{j=1}^k\left(\frac{\lambda_j}{n_j}\right)^{n_j}
$$
is a probability measure on $(S^n,\mathscr{T}^n)$. Given $x\in S^n$, denote also by $\mathscr{V}_x$ the subset of point measures $\nu=(\nu_1,\ldots,\nu_k)$ in $\mathscr{L}$ supported on $\{x_1,\ldots,x_n\}$. Lastly, notice that the collection $\{\mathcal{S}(x): x \in S^n\}$ is made of pairwise disjoint subsets in $\mathscr{T}^n$ and their union is $S^n$. 

With these premises, it follows that, for each $x \in S^n$ and $\lambda \in \mathscr{L}$, it holds
$$
(\bigtimes_{j=1}^k \lambda_j^{n_j})(\mathcal{S}(x))=|\mathcal{S}(x)|\,\lambda^{\nu(x)},
$$
where $\alpha^\beta:=\prod_{j=1}^k\prod_{s \in S}\alpha_j\{s\}^{\beta_j\{s\}}$ for each $\alpha,\beta \in \mathscr{L}$. Hence, for each $x \in S^n$, $\lambda \in \mathscr{V}_x$, and $B \in \mathscr{T}^n$, we obtain
$$
(\bigtimes_{j=1}^k \lambda_j^{n_j})(B)=\sum_{\nu \in \mathscr{V}_x}\binom{n}{\nu_1,\ldots,\nu_k}\lambda^\nu \bm{U}_{\eta(\nu)}(B),
$$
with $\eta(\nu)$ being any element of $S^n$ for which $\nu(\eta(\nu))=\nu$. In particular, it represents a linear system of identities between measures on $(S^n,\mathscr{T}^n)$. Having fixed an order on the finite set $\mathscr{V}_x=\{\gamma_1,\ldots,\gamma_m\}$, the $m\times m$ square symmetric matrix of coefficients with $(i,j)$-th entry given by $\binom{n}{\gamma_i}\gamma_i^{\gamma_j}$ can be seen a leading principal minor of the bigger matrix obtained similarly by setting $k=1$. 

It has been shown in \cite[Theorem 2]{Moak} that all eigenvalues of the latter bigger matrix are (real, of course, and) positive, which is well known to be equivalent of being positive definite. In turn, all leading principal minors are positive definite by Sylvester's criterion. This implies that the above matrix of coefficients has full rank. Therefore, for each $x \in S^n$, there exist real coefficients $\{c_{x,\lambda}, \lambda \in \mathscr{V}_x\}$ such that for all $A \in \mathscr{S}^n$ it holds
$$
\bm{U}_x(A)=\sum_{\lambda \in \mathscr{V}_x}c_{x,\lambda} (\bigtimes_{j=1}^k \lambda_j^{n_j})(A).
$$

At this point, define the bounded signed measure $\mu:\sigma(\mathcal{P}(S))^k \to \mathbf{R}$ on the $k$-fold product $\mathcal{P}(S)^k$ by
$$
Q \mapsto \int_\Omega \sum_{\nu \in \mathscr{V}_{X(\omega)}}n_1^{n_1}\cdots n_k^{n_k} c_{X(\omega),\nu}(\bigtimes_{j=1}^k \delta_{\nicefrac{\nu_j}{n_j}})(Q)\,\mathrm{P}(\mathrm{d}\omega).
$$

Hence we obtain that
\begin{displaymath}
\begin{split}
\mathrm{P}((X_i: i \in I_j) \in A_j, j=1,\ldots,k)&=\int_\Omega \bm{U}_{X(\omega)}(A_1\times \cdots \times A_k)\,\mathrm{P}(\mathrm{d}\omega)\\
&=\int_\Omega \sum_{\lambda \in \mathscr{V}_{X(\omega)}}c_{X(\omega),\lambda} (\bigtimes_{j=1}^k \lambda_j^{n_j})(A_1\times \cdots \times A_k)\,\mathrm{P}(\mathrm{d}\omega)\\
&=\int_\Omega \sum_{\lambda \in \mathscr{V}_{X(\omega)}}n_1^{n_1}\cdots n_k^{n_k}c_{X(\omega),\lambda} \bigtimes_{j=1}^k (\frac{\lambda_j}{n_j})^{n_j}(A_1\times \cdots \times A_k)\,\mathrm{P}(\mathrm{d}\omega),
\end{split}
\end{displaymath}
which can be rewritten as
$$
\int_\Omega \sum_{\lambda \in \mathscr{V}_{X(\omega)}}n_1^{n_1}\cdots n_k^{n_k}c_{X(\omega),\lambda} \left(\int_{\mathcal{P}(S)^k}\nu_1^{n_1}\cdots \nu_k^{n_k}(\bigtimes_{j=1}^k \delta_{\nicefrac{\lambda_j}{n_j}})(\mathrm{d}\nu_1,\ldots,\mathrm{d}\nu_k)\right)(A_1\times \cdots \times A_k)\,\mathrm{P}(\mathrm{d}\omega)
$$
or, equivalently
$$
\int_\Omega \int_{\mathcal{P}(S)^k}\nu_1^{n_1}\cdots \nu_k^{n_k} \sum_{\lambda \in \mathscr{V}_{X(\omega)}}n_1^{n_1}\cdots n_k^{n_k}c_{X(\omega),\lambda} (\bigtimes_{j=1}^k \delta_{\nicefrac{\lambda_j}{n_j}})(\mathrm{d}\nu_1,\ldots,\mathrm{d}\nu_k)(A_1\times \cdots \times A_k)\,\mathrm{P}(\mathrm{d}\omega).
$$

This is equal by Fubini's theorem to
$$
\int_{\mathcal{P}(S)^k} \nu_1^{n_1}(A_1)\cdots \nu_k^{n_k}(A_k) \left(\int_\Omega\sum_{\lambda \in \mathscr{V}_{X(\omega)}}n_1^{n_1}\cdots n_k^{n_k}c_{X(\omega),\lambda} (\bigtimes_{j=1}^k \delta_{\nicefrac{\lambda_j}{n_j}})\mathrm{P}(\mathrm{d}\omega)\right) (\mathrm{d}\nu_1,\ldots,\mathrm{d}\nu_k),
$$
hence
$$
\mathrm{P}((X_i: i \in I_j) \in A_j, j=1,\ldots,k)=\int_{\mathcal{P}(S)^k} \nu_1^{n_1}(A_1)\cdots \nu_k^{n_k}(A_k)\,\mu(\mathrm{d}\nu_1,\ldots,\mathrm{d}\nu_k).
$$
\end{proof}

\begin{proof}[Proof of Corollary \ref{cor:parametric2}]
More generally, let us assume that $S$ is a locally convex topological vector space with induced Borel $\sigma$-field $\mathscr{S}$. Then there exists a bounded signed measure $\eta$ on $S^n$ such that
\begin{equation}\label{eq:representationfiniteexchangeableparametric}
\mathrm{P}\left((X_i: i \in I_j) \in A_j, j=1,\ldots,k\right)=\int_{S^n} \nu_{(\theta_i: i \in I_1)}^{n_1}(A_1) \cdots \nu_{(\theta_i: i \in I_k)}^{n_k}(A_k) \,\eta(\mathrm{d}\theta_1,\ldots,\mathrm{d}\theta_n)
\end{equation}
for all $A_1 \in \mathscr{S}^{n_1},\ldots,A_k \in \mathscr{S}^{n_k}$, where each $\nu_{(\theta_i: i \in I_j)}$ is a probability measure on $S$ depending measurably on $n_j$ parameters $(\theta_i: i \in I_j)$.

This is a straighforward generalization of \cite[Theorem 2]{KonstYuan0}, to which we refer the reader for the proof, taking in account also \cite[Theorem 3.4]{RudinFA}. In particular, since any two uncountable Polish spaces are isomorphic in the category of measurable spaces, it follows by a change of variable argument that the number of indexes reduces to one for each class $I_j$.
\end{proof}

\section{Proofs of Theorem \ref{th:truemixtures} and \ref{th:positive}}\label{sec:truemixtures}

\begin{proof}[Proof of Theorem \ref{th:truemixtures}] According to the standing assumptions, there exists a probability measure $\mu$ on $\mathcal{P}(S)$ such that, for all nonnegative integers $p_j,q_j,r_j$ summing up to $n_j$, and for each $A_j \in \mathscr{S}$ and $B_j \in \mathscr{S}^{q_j}$, it holds
$$
\mathrm{P}((X_i)_{i \in I_j} \in A_j^{p_j} \times B_j \times S^{r_j}, j=1,\ldots,k)=\int_{\mathcal{P}(S)^k}\prod_{j=1}^k\nu_j(A_j)^{p_j} \underbrace{(\nu_j \times \cdots \times \nu_j)}_{q_j\text{ times}}(B_j)\, \mu(\mathrm{d}\nu_1,\ldots,\mathrm{d}\nu_k).
$$
The claim follows by Lemma \ref{lm:CBS}, where each function $f_j$ is given by $\mathcal{P}(S) \to \mathbf{R}: \nu \mapsto \nu(A_j)$ and the function $g_j$ by $\mathcal{P}(S) \to \mathbf{R}: \nu \mapsto \nu^{q_j}(B_j)$.
\end{proof}

\begin{proof}[Proof of Theorem \ref{th:positive}] By the exchangeability assumption, it holds $\sum_{i=0}^n \binom{n}{i}x_i=1$. Hence, we are asking for necessary and sufficient conditions on the (column) vector $x=(x_0,\ldots,x_n)^\prime$ such that there exists a probability measure $\mu:\mathscr{B}[0,1] \to \mathbf{R}$ for which
$
x_i=\int_{[0,1]}p^{n-i}(1-p)^{i} \mu(\mathrm{d}p)
$
 for all $i=0,1,\ldots,n$. Moreover, let us define $y_i$ as the $(n-i)$-th moment of $\mu$. 
It follows that the above conditions are equivalent to
\begin{equation}\label{eq:moments2}
x_i=\sum_{j=0}^i \binom{i}{j}(-1)^{i+j}y_j
\end{equation}
for each $i=0,1,\ldots,n$, where by convention $\binom{0}{0}=1$. In addition, define $y=(y_n,\ldots,y_0)^\prime$ and let $A$ be the square matrix of dimension $(n+1)$ where the $(i,j)$-th element is 
$$
\binom{i-1}{n+2-i-j}(-1)^{i+j+n}\bm{1}_{[n+2,\infty)}(i+j).
$$
Accordingly, the system \eqref{eq:moments2} can be rewritten in matrix form as $x=Ay$. Since the determinant of $A$ is $1$, each $y_i$ can be rewritten uniquely as a linear combination of the $x_j$.  

Let us prove by induction that $y_i=\sum_{j=0}^i \binom{i}{j}x_j$ for each $i=0,1,\ldots,n$. It is trivially true for $n=0$, and let us suppose that it holds for all nonnegative integers smaller than $n$. Then, by the system \eqref{eq:moments2} and the inductive hypothesis, we obtain
\begin{displaymath}
\begin{split}
y_n&=x_n-\sum_{i=0}^{n-1}\binom{n}{i}(-1)^{n+i}y_i=x_n-\sum_{i=0}^{n-1}\binom{n}{i}(-1)^{n+i}\sum_{j=0}^i \binom{i}{j}x_j\\
&=x_n-(-1)^{n}\sum_{i=0}^{n-1}\binom{n}{i}(-1)^{i}\sum_{j=0}^i \binom{i}{j}x_j=x_n-(-1)^{n}\sum_{j=0}^{n-1}x_j\sum_{i=j}^{n-1}\binom{n}{i}\binom{i}{j}(-1)^i.
\end{split}
\end{displaymath}
Hence, it follows by Lemma \ref{lm:identity} that
\begin{equation}\label{eq:xnfinal}
y_n=x_n-(-1)^{n}\sum_{j=0}^{n-1}\binom{n}{j}(-1)^{n-1}x_j=\sum_{j=0}^n \binom{n}{j}x_j.
\end{equation}

Having this in mind, the joint law of $(X_1,\ldots,X_n)$ is a true mixture of i.i.d. $\{0,1\}$-valued random variables if and only if there exists a probability measure $\mu:\mathscr{B}[0,1]\to \mathbf{R}$ which satisfies
$$
\int_{[0,1]} p^i \mu(\mathrm{d}p)=y_{n-i}
$$
for each $i=0,1,\ldots,n$. This problem, commonly known as ``reduced Hausdorff moment problem,'' have been extensively studied in literature; see, for instance, \cite[pp. 8-9]{Shohat} and \cite{Hilde}. According to \cite[Theorem 1]{Jurkat}, a necessary and sufficient condition for the existence of a solution is that, if $n$ is even, both matrices
$$
\begin{pmatrix}
  y_{n} & y_{n-1} & \cdots & y_{\frac{n}{2}} \\
  y_{n-1} & y_{n-2} & \cdots & y_{\frac{n}{2}-1} \\
  \vdots  & \vdots  & \ddots & \vdots  \\
  y_{\frac{n}{2}} & y_{\frac{n}{2}-1} & \cdots & y_0
 \end{pmatrix}
\,\,\,\,\text{and}\,\,\,\,  
\begin{pmatrix}
  y_{n-1}-y_{n-2} & y_{n-2}-y_{n-3} & \cdots & y_{\frac{n}{2}}-y_{\frac{n}{2}-1} \\
  y_{n-2}-y_{n-3} & y_{n-3}-y_{n-4} & \cdots & y_{\frac{n}{2}-1}-y_{\frac{n}{2}-2} \\
  \vdots  & \vdots  & \ddots & \vdots  \\
  y_{\frac{n}{2}}-y_{\frac{n}{2}-1} & y_{\frac{n}{2}-1}-y_{\frac{n}{2}-2} & \cdots & y_1-y_0
 \end{pmatrix}
$$
are positive semi-definite, whereas in case $n$ is odd, both matrices
$$
\begin{pmatrix}
  y_{n-1} & y_{n-2} & \cdots & y_{\frac{n-1}{2}} \\
  y_{n-2} & y_{n-3} & \cdots & y_{\frac{n-3}{2}} \\
  \vdots  & \vdots  & \ddots & \vdots  \\
  y_{\frac{n-1}{2}} & y_{\frac{n-3}{2}} & \cdots & y_0
 \end{pmatrix}
\,\,\,\,\text{and}\,\,\,\,  
\begin{pmatrix}
  y_{n}-y_{n-1} & y_{n-1}-y_{n-2} & \cdots & y_{\frac{n+1}{2}}-y_{\frac{n-1}{2}-1} \\
  y_{n-1}-y_{n-2} & y_{n-2}-y_{n-3} & \cdots & y_{\frac{n-1}{2}}-y_{\frac{n-3}{2}} \\
  \vdots  & \vdots  & \ddots & \vdots  \\
  y_{\frac{n+1}{2}}-y_{\frac{n-1}{2}} & y_{\frac{n-1}{2}}-y_{\frac{n-3}{2}} & \cdots & y_1-y_0
 \end{pmatrix}
$$
are positive semi-definite too. The claim follows by equation \eqref{eq:xnfinal} and standard properties of the binomial coefficients.
\end{proof}

\begin{proof}[Proof of Corollary \ref{cor:reinfor}]
For each $i=1,\ldots,2\lfloor n/2\rfloor-1$, the inequality \eqref{eq:reinf} follows by Lemma \ref{lm:CBS}. Indeed, having fixed $k=1$, $m_1=\lfloor n/2\rfloor$, $A=\{1\}$, and $B=\{0,1\}$ if $n$ is odd, we obtain that the matrix
$$
\begin{pmatrix}
  x_{2m} & x_{2m-1} & \cdots & x_{m} \\
  x_{2m-1} & x_{2m-2} & \cdots & x_{m-1} \\
  \vdots  & \vdots  & \ddots & \vdots  \\
  x_{m} & x_{m-1} & \cdots & x_0
 \end{pmatrix}
$$
is positive semi-definite. In particular, each principal minor $\begin{pmatrix}
  x_{i+1} & x_{i} \\
  x_{i} &  x_{i-1}
 \end{pmatrix}
$ has nonnegative determinant whenever $i+1 \le 2\lfloor n/2\rfloor$. 

Therefore, we miss only to prove that, if $n$ is odd, then $x_{n-1} \le \sqrt{x_nx_{n-2}}$. This is easily seen, according to Theorem \ref{th:positive}, by choosing the $2\times 2$ leading principal minor of $\textsl{K}_n$, which has to have nonnegative determinant. Indeed, by the standard properties of the determinant, we conclude that
$$
\mathrm{det}\begin{pmatrix}
  y_{n}-y_{n-1} & y_{n-1}-y_{n-2} \\
  y_{n-1}-y_{n-2} &  y_{n-2}-y_{n-3}
 \end{pmatrix}
=
\mathrm{det}\begin{pmatrix}
  x_n & x_{n-1} \\
  x_{n-1} &  x_{n-2}
 \end{pmatrix}
\ge 0.
$$

On the other hand, for $n\le 3$, the sufficiency of these inequalities, follows by Theorem \ref{th:positive}. Conversely, again by Theorem \ref{th:positive}, given an integer $n\ge 4$, it is enough to show the existence of a sequence $(x_0,\ldots,x_n)$ such that the inequalities $x_i\le  \sqrt{x_{i-1}x_{i+1}}$ hold for all $i=1,\ldots,n-1$, while the matrix $\textsl{H}_n$ admits a principal minor with negative determinant. To this aim, set 
$$
(x_0,\ldots,x_n)=\frac{1}{3\cdot 2^n+2n+6}\,(9,5,\underbrace{3,3,\ldots,3}_{n-1\text{ times}}).
$$
Then, it is routine to check that this defines an exchangeable law on $\{0,1\}^n$, while, on the other hand, the $3\times 3$ south-east principal minor has a negative determinant.
\end{proof}

By Sylvester's criterion, a matrix is positive semi-definite if and only if its principal minors have nonnegative determinant. Therefore, Theorem \ref{th:positive} allows to establish whether each $\{0,1\}$-valued exchangeable sequence $(X_1,\ldots,X_n)$ is a true mixture of i.i.d. laws in $o(2^n)$ operations (indeed the number of principal minors of a matrix of dimension $n$ is $\sum_{i=1}^n \binom{n}{i}^2$, which is asymptotically equal to $\frac{4^n}{\sqrt{\pi n}}$). Just to realize the computational burden related to Theorem \ref{th:positive}, for $n=4$, notice that the exchangeable law defined by the sequence $(x_0,\ldots,x_4)$ is a true mixture of $\{0,1\}$-valued i.i.d. random variables if and only if each of the following numbers are nonnegative:
\begin{enumerate}[label={\rm (\roman{*})}]
\item\label{item1}  $x_0x_2-x_1^2$; 
\item\label{item2}  $x_1x_3-x_2^2$; 
\item\label{item3}  $2x_0x_2- 2x_1^2 - x_2x_1 + x_0x_3$;  
\item\label{item4}  $x_0x_3 - x_1^2 - x_1x_2 + x_3x_1 - x_2^2 + x_0x_2$; 
\item\label{item5}  $x_0x_4x_2 - x_4x_1^2 + 2x_1x_2x_3 - x_2^3 - x_0x_3^2$; 
\item\label{item6}  $4x_0x_3 - 4x_1^2 - 4x_1x_2 - x_2^2 + 4x_0x_2 + x_0x_4$; 
\item\label{item7}  $2x_0x_2 + 3x_0x_3 - 3x_1x_2 + x_0x_4 + 2x_1x_3 + x_1x_4 - x_2x_3 - 2x_1^2 - 3x_2^2$; 
\item\label{item8}  $x_0x_2 + 2x_0x_3 - 2x_1x_2 + x_0x_4 + 2x_1x_3 + 2x_1x_4 - 2x_2x_3 + x_2x_4 - x_1^2 - 3x_2^2 - x_3^2$. 
\end{enumerate}

Nevertheless, it would be sufficient that the \emph{leading} principal minors of $\textsl{H}_n$ and $\textsl{K}_n$ have positive determinants (in the above example, e.g., it means that $x_0,\ldots,x_4$, and numbers \ref{item1}, \ref{item2}, and \ref{item7} are strictly positive). It is unclear whether this result could be extended to arbitrary spaces $S$.



\section{Open questions}\label{sec:openq}

Generally, given a bounded signed measure $\mu$ on $\mathcal{P}(S)$ such that $\mu(\mathcal{P}(S))=1$, it is not true that the map
$$
\mathscr{S}^n \to \mathbf{R}: A\mapsto \int_{\mathcal{P}(S)}\nu^n(A) \mu(\mathrm{d}\nu)
$$
represents the joint law of some exchangeable sequence $(X_1,\ldots,X_n)$ with values in $S$. Indeed, the integral may attain negative values. Accordingly, can we provide a characterization of the set $\mathcal{M}$ of such signed measures $\mu$? Is there a way to represent $\mu$ itself?

In addition, with the notations of Theorem \ref{th:finiterepresentation}, is it true that 
$$
\frac{n_1+\cdots+n_k}{k} \to \infty \,\,\,\,\,\text{ implies }\,\,\,\,\,\mu^-(\mathcal{P}(S)^k)\to 0\,\,?
$$

Finally, given subgroups $G$ and $G^\prime$ such that $G \subseteq G^\prime$, can we quantify how much $\mathcal{M}(G)$ is ``larger'' than $\mathcal{M}(G^\prime)$?


\section*{Acknowledgments}
The author is supported by a PhD scholarship from Universit\`a Bocconi. He thanks Nate Eldredge (Northern Colorado, US) and Pierpaolo Battigalli, Sandra Fortini, Fabio Maccheroni, Pietro Muliere, and Sonia Petrone (Universit\`a Bocconi, IT) for useful comments.



\end{document}